\newtheorem{lemma}{Lemma}[section]
\newtheorem{thm}[lemma]{Theorem}
\newtheorem{prop}[lemma]{Proposition}
\newtheorem*{prop*}{Proposition}
\newtheorem{prop_intro}{Proposition}
\newtheorem{thm_intro}[prop_intro]{Theorem}
\theoremstyle{definition}
\newtheorem{rem_intro}[prop_intro]{Remark}
\newtheorem{defn}[lemma]{Definition}
\newtheorem{example}[lemma]{Example}
\newtheorem{rem}[lemma]{Remark}
\theoremstyle{definition}
\definecolor{darkgreen}{cmyk}{1,0,1,.2}
\DeclareMathOperator{\inte}{int}
\DeclareMathOperator{\id}{id}
\DeclareMathOperator{\mult}{mult}
\newcommand{\lf}{\text{lf}}
\renewcommand{\phi}{\varphi}
\newcommand{\invlim}{\underset{\longleftarrow}{\lim}}
\begin{document}

\title[]{Simplicial volume of manifolds with amenable fundamental group at infinity}

\author[Giuseppe Bargagnati]{Giuseppe Bargagnati}
\address{Dipartimento di Matematica, Universit\`a di Pisa, Italy}
\email{g.bargagnati@studenti.unipi.it}

\subjclass{57N16, 55N10 (Primary); 53C23, 57N65 (Secondary)}



\keywords{}

\begin{abstract}
    We show that for $n \neq 1,4$ the simplicial volume of an inward tame triangulable open $n$-manifold $M$ with amenable fundamental group at infinity at each end is finite; moreover, we show that if also $\pi_1(M)$ is amenable, then the simplicial volume of $M$ vanishes. We show that the same result holds for finitely-many-ended triangulable manifolds which are simply connected at infinity. 
\end{abstract}

\maketitle

\section{Introduction}
The simplicial volume is a homotopy invariant of manifolds introduced by Gromov in his pioneering article \cite{Gro82} (see Section \ref{simvolamc:sec} for the precise definition). Although the definition of this invariant is completely homotopic in nature, Gromov himself remarked that it is strongly related to the geometric structures that a manifold can carry: for example, he proved that the simplicial volume of an oriented closed connected hyperbolic manifold is strictly positive, and it is proportional to its Riemannian volume. On the other hand, the vanishing of the simplicial volume is implied by amenability conditions on the manifold: in particular, if an oriented closed connected manifold $M$ has an amenable fundamental group, then it has vanishing simplicial volume (this follows from the vanishing of the bounded cohomology of amenable groups, see e.g. \cite[Section 3.1]{Gro82}, \cite[Corollary 7.12]{frigerio:book}). In the context of open manifolds the situation is different; in fact, an open manifold with an amenable fundamental group has either vanishing or infinite simplicial volume (\cite[Corollary 5.18]{Lothesis}). A finiteness criterion for the simplicial volume of tame manifolds, i.e. manifolds which are homeomorphic to the interior of a compact manifold with boundary, was given in \cite[Theorem 6.1]{Lothesis}, \cite[Theorem 6.4]{Loeh}. It states that, given a compact manifold with boundary $(W, \partial W)$, the finiteness of the simplicial volume of $W^{\circ}$ is equivalent to the $\ell^1$-invisibility of $\partial W$ (i.e., to the fact that the fundamental class of $\partial W$ is null in $\ell^1$-homology). A consequence of this result is that if each connected component of $\partial W$ has an amenable fundamental group, then the simplicial volume of $W^{\circ}$ is finite. 

The fundamental group at infinity (see Section \ref{fgi:sec} for the definition) is an algebraic tool that is useful to study open manifolds. If $(W, \partial W)$ is an oriented compact connected manifold with connected boundary, the fundamental group at infinity of $W^{\circ}$ is isomorphic to $\pi_1(\partial W)$. This concept allows us to generalize the result of L\"oh to manifolds which are not necessarily the interior of a compact manifold with boundary. Since the fundamental group at infinity is defined as an inverse limit, it is endowed with a natural topology. We relate the amenability of this group as a topological group to the finiteness/vanishing of the simplicial volume. In particular, we focus on two classes of manifolds:

\begin{itemize}
    \item \textit{inward tame manifolds}, i.e. manifolds in which the complement of each compact subset can be shrunk into a compact within itself (see Section \ref{fgi:sec} for the precise definition);
    \item manifolds which are \textit{simply connected at infinity}, i.e. such that for any compact subset $C \subset M$ there exists a larger compact subset $D \supset C$ such that every loop in $M \setminus D$ is trivial in $M \setminus C$.
\end{itemize}

The main purpose of this paper is to prove the following results.

\begin{thm_intro}\label{teor1}
Let $n \geq 5$ be a natural number and let $M$ be a connected triangulable inward tame $n$-manifold with amenable fundamental group at infinity at each end. Then the simplicial volume of $M$ is finite.
\end{thm_intro}

\begin{thm_intro}\label{teor2}
Let $n \geq 5$ be a natural number and let $M$ be a connected triangulable inward tame $n$-manifold with amenable fundamental group and amenable fundamental group at infinity at each end. Then the simplicial volume of $M$ vanishes.
\end{thm_intro}

\begin{thm_intro}\label{teor3}
Let $n\geq 5$ be a natural number, and let $M$ be an open triangulable finitely-many-ended $n$-manifold. Let us suppose that $M$ is simply connected at infinity. Then the simplicial volume of $M$ is finite. Moreover, if $\pi_1(M)$ is amenable, we have that the simplicial volume of $M$ vanishes.
\end{thm_intro}

\begin{rem_intro}
In dimension $n=2$ there are no inward tame manifolds which are not tame; remarkably, combining \cite{TUCKER1974267} with the Poincar\'e Conjecture (\cite{Per02}, \cite{Per03}, \cite{Per03bis}, \cite{CaoZhu}, \cite{MorganTian}, \cite{francesi}), the same holds in dimension $n=3$. In particular this implies that Theorem \ref{teor1}, Theorem \ref{teor2} and Theorem \ref{teor3} hold also if $n=2,3$ thanks to previous results of L\"oh \cite[Theorem 6.1]{Lothesis}, \cite[Theorem 6.4]{Loeh}.
\end{rem_intro}

The main ingredients for the proofs are two results of \cite{FM19}. These results give sufficient conditions for an open cover to imply the finiteness (respectively, the vanishing) of the simplicial volume.

Under our hypotheses, we deduce the existence of such covers from the amenability of the fundamental group at infinity at each end. In order to do this, we use a result from \cite{Gu00}, which is stated at the end of Section \ref{fgi:sec}. 

\subsection*{Plan of the paper}
In Section \ref{fgi:sec}, we recall the algebra of inverse sequences, we define the fundamental group at infinity and we specify the terminology regarding open manifolds, in particular we focus on the classes of tame manifolds, inward tame manifolds and manifolds which are simply connected at infinity. In Section \ref{simvolamc:sec}, we define the simplicial volume and we state the results from \cite{FM19} that will be the key for the proof of Theorems \ref{teor1}, \ref{teor2} and \ref{teor3}, to which Section \ref{proof:sec} is devoted.

\subsection*{Acknowledgments}
I thank my advisor Roberto Frigerio for helpful comments and discussions. I thank Dario Ascari and Francesco Milizia for useful conversations.

\section{Fundamental group at infinity}\label{fgi:sec}

Following \cite{Gu00}, we start by recalling the algebra of inverse sequences, that will be useful for the definition of the fundamental group at infinity.

Let

\[G_0 \overset{f_1}{\longleftarrow} G_1 \overset{f_2}{\longleftarrow} G_2 \overset{f_3}{\longleftarrow} \ldots\]
be an inverse sequence of groups and homomorphisms (such a sequence will be denoted by $\{G_i, f_i\}$ henceforth). Let $i,j$ be two natural numbers such that $i>j$; after denoting the composition $f_{j}\circ f_{j+1} \ldots \circ f_{i}$ by $f_{ij}$, we can naturally define a subsequence of $\{G_i, f_i\}$ as follows:
\[G_{i_0} \overset{f_{i_1i_0}}{\longleftarrow} G_{i_1} \overset{f_{i_2i_1}}{\longleftarrow} G_{i_2} \overset{f_{i_3i_2}}{\longleftarrow} \ldots \ .\]
Two sequences $\{G_i, f_i\}$ and $\{H_i, g_i\}$ are said to be pro-isomorphic if there exists a commuting diagram of the form
\begin{center}
\begin{tikzcd}
G_{i_0} & & \arrow[ll, "f_{i_1i_0}"] G_{i_1} \arrow[dl] & & G_{i_2} \arrow[ll, "f_{i_2i_1}"] \arrow[dl] &  \ldots \arrow[l] \\
& H_{j_0} \arrow[ul] & & \arrow[ll, "g_{j_1j_0}"] H_{j_1} \arrow[ul] & & H_{j_2} \arrow[ll, "g_{j_2j_1}"] \arrow[ul] & \ldots \arrow[l]
\end{tikzcd} 
\end{center}
for some subsequences $\{G_{i_k}, f_{i_ki_{k-1}}\}$ and $\{H_{j_k}, g_{j_kj_{k-1}}\}$ of $\{G_i, f_i\}$ and $\{H_j, g_j\}$ respectively.

The inverse limit of a sequence $\{G_i, f_i\}$ is defined as

\[\invlim \{G_i, f_i\}=\Big\{(g_0, g_1, \ldots) \in \prod_{i \in \mathbb{N}}G_i \big| f_i(g_i)=g_{i-1}\Big\},\]
with the group operation defined componentwise; this turns out to be a subgroup of $\prod_{i \in \mathbb{N}}G_i$. For each $i \in \mathbb{N}$ we have a natural projection homomorphism $p_i: \invlim \{G_i, f_i\} \rightarrow G_i$.

It can be easily seen that inverse limits of pro-isomorphic sequences are isomorphic, while we remark that two sequences with isomorphic inverse limit are not necessarily pro-isomorphic; for example the trivial sequence
\[1 \longleftarrow 1 \longleftarrow 1 \longleftarrow \ldots\]
and the following one
\[\mathbb{Z} \overset{\cdot 2}{\longleftarrow}\mathbb{Z} \overset{\cdot 2}{\longleftarrow}\mathbb{Z} \overset{\cdot 2}{\longleftarrow} \ldots\]
have both inverse limit isomorphic to the trivial group, but it is straightforward that they are not pro-isomorphic (see e.g. \cite[Subsection 3.4]{10.1007/978-3-319-43674-6_3}).

We distinguish some particular classes of sequences; denoting by $\{G, \id_G\}$ a sequence $\{G_i, f_i\}$ such that $G_i \simeq G$ and $f_i=\id_G$ for all $i\in \mathbb{N}$, we say that:
\begin{itemize}
    \item a sequence pro-isomorphic to one of the form $\{1, \id \}$ is called \textit{pro-trivial};
    \item a sequence pro-isomorphic to one of the form $\{H, \id_H\}$ is called \textit{stable};
    \item a sequence pro-isomorphic to one whose maps are all surjective is called \textit{semistable}.
\end{itemize}

In the case of stable sequences, we remark that $H$ is well defined up to isomorphism, since the inverse limit of the sequence is isomorphic to $H$.

\begin{lemma}\label{lemamn}
Let $\{G_i\}_{i \in \mathbb{N}}$ be discrete groups and let $f_i:G_{i}\rightarrow G_{i-1}$ be a group homomorphism for each $i \in \mathbb{N}_{>0}$. Let us suppose that $G= \invlim \{G_i, f_i\}$ equipped with the limit topology is an amenable group (as a topological group) and that for every $i>0$ the homomoprhism $f_i$ is surjective. Then $G_i$ is amenable for each $i\in \mathbb{N}$. 
\end{lemma}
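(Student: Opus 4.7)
The plan is to show that each projection $p_i \colon G \to G_i$ is a continuous surjective group homomorphism, and then to invoke the standard fact that continuous homomorphic images of amenable topological groups are amenable.

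First I would verify surjectivity of $p_i$, which is where the hypothesis that all $f_j$ are surjective enters. Given $a \in G_i$, I want to produce an element $(g_0, g_1, \ldots) \in G$ with $g_i = a$. For indices $j \le i$, I simply set $g_j := f_{j+1}(f_{j+2}(\cdots f_i(a)\cdots))$, which is forced by the compatibility condition. For indices $j > i$, I construct $g_j$ inductively: once $g_{j-1}$ has been defined, the surjectivity of $f_j \colon G_j \to G_{j-1}$ provides some $g_j \in G_j$ with $f_j(g_j) = g_{j-1}$. The resulting sequence lies in $G$ by construction and projects to $a$ under $p_i$, so $p_i$ is surjective. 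Continuity of $p_i$ is immediate from the definition of the limit topology, which is the subspace topology induced from the product $\prod_j G_j$ of discrete spaces.

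Next I would invoke the quotient property of amenability for topological groups: if $\varphi \colon G \to H$ is a continuous surjective homomorphism of topological groups and $G$ is amenable, then $H$ is amenable. This is the usual pushforward of invariant means: given a left-invariant mean $\mu$ on $C_b(G)$, the functional $f \mapsto \mu(f \circ \varphi)$ is a left-invariant mean on $C_b(H)$, where left-invariance uses precisely that every $h \in H$ lifts to some $g \in G$ with $\varphi(g) = h$. Applying this to $\varphi = p_i$, we conclude that $G_i$ is amenable as a topological group. Since $G_i$ is discrete by hypothesis, amenability as a topological group coincides with amenability as an abstract discrete group, which is the desired conclusion.

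I do not anticipate a serious obstacle here; the argument is essentially a soft functional-analytic one. The only point that requires a small argument is the surjectivity of $p_i$, for which the surjectivity of all bonding maps $f_j$ is exactly what one needs in order to perform the inductive lifting. If one wanted to be completely self-contained and avoid citing the quotient property of amenability, one could alternatively reproduce the short mean-pushforward argument in situ, but this is standard and adds little.
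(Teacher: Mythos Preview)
Your proposal is correct and follows exactly the paper's own argument: show that each projection $p_i$ is a continuous surjection (surjectivity coming from the surjectivity of the bonding maps), and then conclude by the fact that a continuous homomorphic image of an amenable topological group is amenable. The paper compresses the surjectivity check into a single sentence and cites the quotient property from the literature, whereas you spell out the inductive lifting and the mean-pushforward, but the route is the same.
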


\begin{proof}
By definition of inverse limit, if the $f_i$'s are surjective we have that the projections $p_i:G \rightarrow G_i$ are surjective too; by definition of the limit topology on $G$, we have that the projections are also continuous. Then each $G_i$ is a continuous image of an amenable group, so it is amenable \cite[Lemma 4.1.13]{Zimmer1984}.
\end{proof}

\begin{rem}
As remarked in \cite[Proposition 1.1]{GM96}, the limit topology on an inverse limit of a sequence of discrete groups is discrete if and only if the sequence is stable.
\end{rem}

Following again \cite{Gu00}, we recall the definition of fundamental group at infinity of a manifold.
Let $M$ be a triangulable manifold (henceforth, all manifolds will be triangulable) without boundary, and let $A \subset M$ be any subset; we say that a connected component of $M \setminus A$ is bounded if it has compact closure, otherwise we say that it is unbounded. A \textit{neighborhood of infinity} is a subset $N \subset M$ such that $M \setminus N$ is bounded. We say that a neighborhood of infinity is \textit{clean} if it satisfies the following properties:
\begin{itemize}
    \item $N$ is closed in $M$;
    \item $N$ is a submanifold of codimension $0$ with bicollared boundary.
\end{itemize}
It can be easily shown that every neighborhood of infinity contains a clean neighborhood of infinity.\\
Let $k$ be a natural number. A manifold $M$ is said to have $k$ ends if there exists a compact subset $C \subset M$ such that for any other compact subset $D$ such that $C \subset D$, $M \setminus D$ has exactly $k$ unbounded components; this implies that $M$ contains a clean neighborhood of infinity with $k$ connected components. If such $k$ does not exist, we say that the manifold $M$ has infinitely many ends. 

Let us suppose that $M$ is a $1$-ended manifold without boundary. A clean neighborhood of infinity is called a $0$-neighborhood of infinity if it is connected and has connected boundary. A sequence of nested neighborhoods of infinity $U_0 \supset U_1 \supset U_2 \supset \ldots$ is \textit{cofinal} if $\cap_{i=0}^{+\infty}U_i= \varnothing$.

Let $U_0 \supset U_1 \supset U_2 \supset \ldots$ be a cofinal sequence of nested $0$-neighborhoods of infinity; a \textit{base-ray} is a proper map  $\rho:[0,+ \infty) \rightarrow M$. For each $i\in \mathbb{N}$ let $p_i:=\rho(i)$; up to reparametrization, we can assume that $p_i\in U_i$ and $\rho([i, +\infty))\subset U_i$. The inclusions $U_{i-1} \hookleftarrow U_{i}$ along with the change of basepoints maps induced by $\rho|_{[i-1, i]}$ give rise to the following sequence
\[\pi_1(U_0, p_0) \overset{\lambda_1}{\longleftarrow} \pi_1(U_1, p_1) \overset{\lambda_2}{\longleftarrow} \pi_1(U_2, p_2) \overset{\lambda_3}{\longleftarrow} \ldots .\]
We denote the inverse limit of this sequence with $\pi_1^{\infty}(M, \rho)$. If the sequence is semistable, it can be shown that its pro-equivalence class does not depend neither on the base-ray $\rho$ nor on the choice of the sequence of neighborhoods of infinity (see e.g. \cite[Section 16.1]{Geoghegan2008}). In this case, we can define the fundamental group at infinity of $M$ as
\[\pi_1^{\infty}(M):= \invlim (\pi_1(U_i, p_i), \lambda_i).\]

If $M$ has more than one end, let $U_0 \supset U_1 \supset U_2 \supset \ldots$ be a cofinal sequence of nested clean neighborhoods of infinity; we say that two base-rays $\rho_1, \rho_2$ \textit{determine the same end} if their restrictions $\rho_1|_{\mathbb{N}}$ and $\rho_2|_{\mathbb{N}}$ are properly homotopic. In the case of multiple ends, we need to specify the base-ray to define the fundamental group at infinity \textit{of each end}; however, if the sequence that defines the fundamental group at infinity at each end is semistable we have that if $\rho_1$ and $\rho_2$ determine the same end, then $\pi_1^{\infty}(M, \rho_1)\simeq \pi_1^{\infty}(M, \rho_2)$ (see again \cite[Section 16.1]{Geoghegan2008}).

With a similar procedure, we can define the homology at infinity of a $1$-ended manifold $M$: let $U_0 \supset U_1 \supset U_2 \supset \ldots$ be a cofinal sequence of nested $0$-neighborhoods of infinity and let us denote the map induced by the inclusion $U_{j+1}\hookrightarrow U_j$ on the $k$-th homology groups with coefficients in a ring $R$ by $\mu_j$. Then the $k$-th homology at infinity of $M$ with $R$-coefficients is defined as the inverse limit of the following sequence:

\[H_k(U_0; R) \overset{\mu_1}{\longleftarrow} H_k(U_1; R) \overset{\mu_2}{\longleftarrow} H_k(U_2, R) \overset{\mu_3}{\longleftarrow} \ldots ,\]
and is denoted by $H_k^{\infty}(M; R)$. If this sequence is semistable, it can be shown that the limit does not depend on the choice of the neighborhoods of infinity. In the multiple-ended case, we can define the homology at infinity \textit{at each end} similarly to what was done for the fundamental group; again, the semistability of the end ensures indipendence from the choice of the base-ray.

\begin{example}
Let $M$ be a connected tame manifold without boundary, i.e. a manifold which is homeomorphic to the interior of a compact manifold $N$ with boundary. Moreover, let us suppose that $\partial N$ is connected. Then $M$ is a $1$-ended open manifold, and taking nested collar neighborhoods of the (missing) boundary as neighborhoods of infinity, we obtain that the fundamental group at infinity of $M$ is isomorphic to $\pi_1(\partial N)$.
\end{example}

The following definition is a weakening of the notion of tame manifold.
\begin{defn}{\label{inwtame}}
A manifold $M$ is \textit{inward tame} if for every neighborhood of infinity $U$ there exists a homotopy $H: U \times [0,1] \rightarrow U$ such that $H(\cdot, 0)=\id_U$ and $\overline{H(U, 1)}$ is compact.
\end{defn}
In other words, this means that every neighborhood of infinity $U$ can be shrunk into a compact subset within $U$; we remark that being inward tame implies that each clean neighborhood of infinity has finitely presented fundamental group \cite[Lemma 2.4]{GT03}. According to our definitions, it is immediate to check that a tame manifold is also inward tame. On the other hand, well known examples of inward tame manifolds which are not tame are the exotic universal covers of the aspherical manifolds produced by Davis in \cite{Dvs83} (see e.g. \cite[Example 3.3.32]{10.1007/978-3-319-43674-6_3}). Being inward tame ensures some nice properties for the topology at infinity of a manifold, as shown by the following proposition.

\begin{prop}[{\cite[Theorem 1.2]{GT03}}]\label{inwprop}
Let $M$ be an inward tame manifold; then 
\begin{itemize}
    \item $M$ has finitely many ends;
    \item each end has semistable fundamental group at infinity;
    \item each end has stable homology at infinity in all degrees.
\end{itemize}
\end{prop}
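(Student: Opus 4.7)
The plan is to prove the three assertions in order, since each subsequent point builds on the previous one; the first is elementary while the second is the technical heart.

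\textbf{Finitely many ends.} Given any clean neighborhood of infinity $U$, inward tameness provides a homotopy $H\colon U\times[0,1]\to U$ with $H(\cdot,0)=\id_U$ and $K:=\overline{H(U,1)}$ compact. Each connected component $C$ of $U$ is clopen in $U$, so every track $t\mapsto H(x,t)$ starting at $x\in C$ stays inside $C$; hence $H(C,1)\subset C\cap K$, and in particular $C\cap K\neq\varnothing$ for every component $C$. If $U$ had infinitely many unbounded components, these would produce infinitely many pairwise disjoint nonempty (relatively) open subsets of the compact set $K$, a contradiction. Choosing $U$ deep enough, this forces $M$ to have only finitely many ends.

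\textbf{Semistability of $\pi_1^\infty$ at each end.} Fix one end, a cofinal nested sequence $U_0\supset U_1\supset\ldots$ of $0$-neighborhoods of infinity, and a base-ray $\rho$. Inward tameness yields a compact $K_i\subset U_i$ into which $U_i$ homotopes; since $K_i$ is compact and the $U_j$'s are cofinal with empty intersection, after passing to a subsequence one may assume $K_i\subset U_i\setminus U_{i+2}$. This \emph{inward push} shows that every loop in $U_{i+1}$ is freely homotopic inside $U_i$ to a loop lying in the compact set $K_i$. Combining this with a careful change-of-basepoint analysis along $\rho$, one checks that the image of $\pi_1(U_j,p_j)\to\pi_1(U_i,p_i)$ stabilizes as $j\to\infty$, which is the Mittag-Leffler condition and is equivalent to semistability. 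Making the basepoint analysis rigorous is the main obstacle: one must compare tracks of the inward-push homotopy with the base-ray in a way that respects the inverse-system structure, which requires Siebenmann-style handle or simplicial manipulations made available by the triangulability hypothesis.

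\textbf{Stability of homology at infinity.} Once semistability of $\pi_1^\infty$ is in hand, inward tameness combined with finite presentation of $\pi_1(U_i)$ (cf. \cite[Lemma 2.4]{GT03}) shows that each $U_i$ is finitely dominated, hence $H_k(U_i;\Z)$ is finitely generated for every $k$. The images of the bonding maps $H_k(U_j;\Z)\to H_k(U_i;\Z)$ then form a descending chain of subgroups of a finitely generated abelian group and thus stabilize. To upgrade this Mittag-Leffler conclusion to genuine stability in every degree one exploits the manifold structure: Poincar\'e-Lefschetz duality between neighborhoods of infinity and their complements relates $H_k$ to locally finite homology in complementary degree, and this extra rigidity forces the kernels to vanish along an appropriate subsequence. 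The main obstacle of the whole proposition is semistability in the second step, as both the finiteness of ends and the stability of homology are comparatively soft once that step is secured.
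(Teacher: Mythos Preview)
The paper does not prove this proposition at all: it is quoted verbatim from \cite[Theorem 1.2]{GT03} and used as a black box. There is therefore no argument in the paper to compare your sketch against; any genuine proof must be sought in Guilbault--Tinsley.

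That said, your sketch has real gaps beyond what one would expect of an outline. In the first item, ``infinitely many pairwise disjoint nonempty open subsets of a compact set'' is not by itself a contradiction (take the intervals $(1/(n+1),1/n)$ in $[0,1]$); what you need is that the components, being clopen in $U$, \emph{partition} $K$, and then compactness forces only finitely many cells of the partition to be nonempty. In the second item you correctly identify the basepoint bookkeeping as the crux, but you do not actually carry it out: the sentence ``the image of $\pi_1(U_j,p_j)\to\pi_1(U_i,p_i)$ stabilizes'' is precisely the statement to be proved, and nothing in your paragraph establishes it. The actual argument in \cite{GT03} goes through finite domination of clean neighborhoods of infinity and a careful analysis of the resulting bonding maps; your ``inward push'' heuristic does not substitute for this. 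In the third item, the descending-chain argument yields only the Mittag--Leffler condition (semistability), not stability; your appeal to Poincar\'e--Lefschetz duality to kill kernels is asserted rather than argued, and in fact the proof in \cite{GT03} proceeds differently, again via finite domination. Since the paper treats the proposition as an imported result, the honest move here is to cite \cite{GT03} rather than attempt an independent proof.
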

Hence, for inward tame manifolds the fundamental group at infinity at each end is well defined (i.e. it does not depend on the base-ray which determines the end or on the chosen sequence of neighborhoods of infinity).

\begin{defn}
We say that an end of an open manifold has amenable fundamental group at infinity if the fundamental group at infinity of the end is amenable, in the sense of topological groups. We remark that this condition is weaker then being amenable as a discrete group.
\end{defn}

In the introduction we defined manifolds simply connected at infinity. In the finitely-many-ended case, this definition can be read in terms of pro-triviality of the sequence that defines the fundamental group at infinity at each end.

\begin{lemma}{\cite[Proposition 3.4.36]{10.1007/978-3-319-43674-6_3}}\label{lemmscinf}
A finitely-many-ended manifold is simply connected at infinity if and only if the sequence which defines the fundamental group at infinity at every end is pro-trivial.
\end{lemma}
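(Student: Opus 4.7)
The plan is to recast pro-triviality in concrete topological terms and then chase neighborhoods. I would first observe that an inverse sequence $\{G_i,f_i\}$ is pro-trivial if and only if for every index $i$ there exists $j>i$ such that the composition $f_{j,i}:G_j\to G_i$ is the zero map: the ``only if'' direction is forced because in the commuting diagram defining pro-isomorphism with $\{1,\id\}$ the map $f_{i_ki_{k-1}}$ factors through $1$; conversely, once the bonding maps along a cofinal subsequence vanish, the trivial maps to and from $1$ assemble the required diagram. Writing $U_0\supset U_1\supset\cdots$ for a cofinal nested sequence of clean neighborhoods of infinity and $U_i^{(e)}$ for the component corresponding to an end $e$, pro-triviality at end $e$ therefore amounts to the purely topological statement that for every $i$ there exists $j>i$ such that every loop in $U_j^{(e)}$ is null-homotopic in $U_i^{(e)}$.

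The forward direction is then essentially a translation. Given an end $e$ and an index $i$, I would set $C:=\overline{M\setminus U_i}$ (compact, because $U_i$ is a neighborhood of infinity) and invoke simple connectivity at infinity to produce a compact $D\supset C$ such that every loop in $M\setminus D$ bounds a disc in $M\setminus C=\inte(U_i)$. I would then pick $j>i$ with $U_j\subset\inte(U_i)\setminus D$; this uses compactness of $D$ and of $\partial U_i$ (the latter sits inside $\overline{M\setminus U_i}$), together with $\bigcap_kU_k=\varnothing$. Any loop $\gamma$ in $U_j^{(e)}$ then lives in $M\setminus D$ and bounds a disc in $\inte(U_i)$; since the disc is connected and $U_j^{(e)}$ meets a single component of $\inte(U_i)$, namely $\inte(U_i^{(e)})$, the disc lies in $U_i^{(e)}$, proving that the bonding map $\pi_1(U_j^{(e)})\to\pi_1(U_i^{(e)})$ is trivial.

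For the converse I would fix an arbitrary compact $C\subset M$, choose $i$ with $U_i\cap C=\varnothing$, and exploit finiteness of the number of ends to pick a single $j>i$ for which $\pi_1(U_j^{(e)})\to\pi_1(U_i^{(e)})$ is trivial for every end $e$ simultaneously (take the maximum of the per-end indices supplied by pro-triviality). Set $D:=\overline{M\setminus U_j}$; it is compact, and $C\subset M\setminus U_i\subset M\setminus U_j\subset D$. Any loop in $M\setminus D=\inte(U_j)$ lies in some $\inte(U_j^{(e)})$ and, by the triviality of the bonding map, is null-homotopic in $U_i^{(e)}\subset U_i\subset M\setminus C$, giving simple connectivity at infinity.

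I expect the main obstacle to be a small bookkeeping subtlety rather than a substantial difficulty: reconciling the basepoint-free statement of simple connectivity at infinity with the based formulation of the inverse system for $\pi_1^\infty$ (including the change-of-basepoint contributed by the base-ray). This is harmless once one notes that each $U_i^{(e)}$ is path connected, so a loop in $U_j^{(e)}$ is null-homotopic in $U_i^{(e)}$ if and only if any of its conjugates by a path connecting it to the basepoint $p_i$ represents the identity in $\pi_1(U_i^{(e)},p_i)$. The rest of the argument consists of making sure that $U_j$ sits inside the interior of $U_i$ so that discs stay in the correct end-component, and this is precisely what the compactness of $\partial U_i$ and the cofinality of the sequence deliver.
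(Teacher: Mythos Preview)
The paper does not prove this lemma; it is quoted verbatim from Guilbault's survey \cite[Proposition~3.4.36]{10.1007/978-3-319-43674-6_3} and used as a black box. So there is nothing to compare your argument against in the paper itself.

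That said, your proposal is correct and is essentially the standard argument one would expect in the cited source. Your preliminary characterisation of pro-triviality (for every $i$ there exists $j>i$ with $f_{j,i}$ trivial) is right, and both directions of the equivalence are handled cleanly: in the forward direction the compactness of $\overline{M\setminus U_i}$ and of $\partial U_i$, together with cofinality, lets you push $U_j$ inside $\inte(U_i)\setminus D$; in the converse the finiteness of the set of ends lets you choose a single $j$ that works simultaneously for all of them. The basepoint/base-ray issue you flag is indeed harmless for the reason you give, since each $U_i^{(e)}$ is path connected and the change-of-basepoint isomorphism along $\rho$ takes place entirely inside $U_i^{(e)}$. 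The only implicit convention worth making explicit is that the $U_i$ are chosen far enough out that each already has exactly one component per end, so that the labelling $U_i^{(e)}$ is consistent with the nesting; this is automatic from the definition of a finitely-many-ended manifold and costs nothing.
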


\begin{rem}
We remark that for a $1$-ended manifold having trivial fundamental group at infinity does not imply being simply connected at infinity: for example it is easy to see that the Whitehead manifold introduced in \cite{White} has trivial fundamental group at infinity but is not simply connected at infinity.
\end{rem}

We stress that a manifold simply connected at infinity need not be inward tame, not even in the 1-ended case (\cite[Example 3.5.11]{10.1007/978-3-319-43674-6_3}).

We state a result from \cite{Gu00} that ensures that, in dimension $n\geq 5$, a sequence $\{G_i, f_i\}$ pro-isomorphic to one that defines the fundamental group at infinity can be realized as a sequence of fundamental groups of neighborhoods of infinity. The result is originally stated for inward tame manifolds; however, the author remarks that it is enough to suppose that the fundamental groups of the neighborhoods of infinity are finitely presented (\cite[Section 6]{Gu00}).
\begin{prop}{\cite[Lemma 8]{Gu00}}\label{lemintinf}
Let $n \geq 5$, let $M$ be a $1$-ended open $n$-manifold such that every neighborhood of infinity has finitely presented fundamental group and let $\mathcal{G}=\{G_j, f_j\}_{j \in \mathbb{N}}$ be a sequence pro-isomorphic to a sequence that realizes the fundamental group at infinity of $M$. Then there exists a cofinal sequence of $0$-neighborhoods of infinity $\{U_j\}_{j \in \mathbb{N}}$ such that the inverse sequence $\pi_1(U_0) \longleftarrow \pi_1(U_1) \longleftarrow \pi_1(U_2) \longleftarrow \ldots$ is pro-isomorphic to $\mathcal{G}$.
\end{prop}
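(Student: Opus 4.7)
The plan is to start from any cofinal sequence of $0$-neighborhoods of infinity $V_0 \supset V_1 \supset V_2 \supset \ldots$ of $M$; the resulting inverse sequence $\{\pi_1(V_i), \lambda_i\}$ realizes $\pi_1^{\infty}(M)$ and is therefore pro-isomorphic to $\mathcal{G}$. Unpacking the definition of pro-isomorphism, after passing to subsequences on both sides I obtain a commuting ladder with homomorphisms $G_{j_k} \to \pi_1(V_{i_k})$ and $\pi_1(V_{i_{k+1}}) \to G_{j_k}$; this ladder will serve as the blueprint for modifying the $V_{i_k}$'s.

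Concretely, I would construct the desired neighborhoods $U_k$ recursively by performing handle surgery on $\partial V_{i_k}$ inside $V_{i_{k-1}}$. Loops in $\pi_1(V_{i_k})$ whose classes lie in the kernel of the prescribed map $\pi_1(V_{i_k}) \to G_{j_k}$ can be killed by attaching $2$-handles along embedded disks bounding them in the larger neighborhood, while any generators of $G_{j_k}$ missing from the image of $\pi_1(V_{i_k})$ can be introduced by drilling out thin $1$-handles from the complementary compactum. Finite presentability of $\pi_1(V_{i_k})$ (and hence of $G_{j_k}$ up the ladder) ensures that each stage requires only finitely many handle modifications, so a standard Siebenmann-style handle-trading argument produces a $0$-neighborhood of infinity $U_k$ with $\pi_1(U_k) \cong G_{j_k}$ and with the inclusion $U_{k+1} \hookrightarrow U_k$ inducing, up to conjugation by the base-ray, the target homomorphism $f_{j_{k+1} j_k}$. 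Cofinality of $\{U_k\}$ is automatic: each $U_k$ differs from $V_{i_k}$ by a compact modification, so the nesting and the emptiness of the global intersection are preserved.

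The main obstacle, and the source of the hypothesis $n \geq 5$, is the geometric realization of the attaching data. To attach $2$-handles one needs the bounding $2$-disks to be embedded with pairwise disjoint interiors, and to drill $1$-handles one needs to pipe thin tubes without disturbing the previous choices. In codimension $\geq 3$, that is in $n \geq 5$, standard general position together with the Whitney trick make both operations routine, and they simultaneously ensure that the new boundary $\partial U_k$ remains connected and bicollared, so that $U_k$ is a genuine $0$-neighborhood of infinity. The flexibility of the pro-isomorphism (conjugation by the base-ray, further subsequencing) is exactly what allows the recursive step to close up consistently with the prescribed ladder.
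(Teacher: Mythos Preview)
The paper does not supply its own proof of this proposition: it is simply quoted as \cite[Lemma~8]{Gu00}, together with the remark (taken from \cite[Section~6]{Gu00}) that Guilbault's original inward-tameness hypothesis can be weakened to finite presentability of the fundamental groups of neighborhoods of infinity. So there is no in-paper argument to compare against.

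That said, your outline is essentially Guilbault's proof. His Lemma~8 is established by exactly the Siebenmann-style neighborhood-modification procedure you describe: starting from an arbitrary cofinal sequence of $0$-neighborhoods, one reads off from the pro-isomorphism ladder which loops must be killed (by carving $2$-handles along disks bounding in the next larger neighborhood) and which $1$-handles must be drilled to introduce missing generators; finite presentability guarantees that only finitely many handles are needed at each stage, and $n\geq 5$ provides the general-position and Whitney-trick input that makes the required embeddings available while keeping the modified boundaries connected and bicollared. One small refinement: the conclusion Guilbault actually obtains (and the one the present paper uses in the proof of Theorem~1) is stronger than pro-isomorphism---the constructed sequence $\{\pi_1(U_k)\}$ literally realizes a subsequence $\{G_{n_k}\}$ of $\mathcal{G}$ with the prescribed bonding maps. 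Your target $\pi_1(U_k)\cong G_{j_k}$ is therefore exactly what is wanted, and your sketch is aimed at the right statement.
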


\begin{rem}\label{finmany}
The previous proposition can be generalized to the finitely-many-ended case. Let us suppose that $M$ has $k$ ends; by definition, there exists a compact subset $K_0 \subset M$ such that $M \setminus K_0$ has exactly $k$ unbounded connected components, so there exists a sequence of nested clean neighborhoods of infinity $U_0 \supset U_1 \supset \ldots$ such that each $U_j$ has exactly $k$ connected components $U_j^1, \ldots, U_j^k$ for each $j\in \mathbb{N}$. Let us suppose that for each $i=1, \ldots, k$ the sequence $\pi_1(U_0^i) \longleftarrow \pi_1(U_1^i) \longleftarrow \pi_1(U_2^i) \longleftarrow \ldots$ is proisomorphic to a sequence $\mathcal{G}^i=\{G_j^i, f_j^i\}_{j \in \mathbb{N}}$. We can apply Proposition \ref{lemintinf} to each end separately to obtain a sequence of (disconnected) neighborhoods of infinity $\{\widetilde{U_j}\}_{j \in \mathbb{N}}$ with the property that the inverse sequence of the fundamental groups of the $i$-th connected component of the $\widetilde{U_j}$'s is pro-isomorphic to the sequence $\mathcal{G}^i$.
\end{rem}

\section{Simplicial volume and amenable covers}\label{simvolamc:sec}

Let $X$ be a topological space and let $R$ be either the ring of the integers $\mathbb{Z}$ or of the real numbers $\mathbb{R}$. We denote by $S_n(X)$ the set of singular $n$-simplices on $X$ for a given natural number $n\in \mathbb{N}$. A singular $n$-chain on $X$ with $R$-coefficients is a sum $\sum_{i\in I}a_i \sigma_i$, where $I$ is a (possibly infinite) set of indices, $a_i \in R$ and the $\sigma_i$'s are singular simplices. A chain is said to be \textit{locally finite} if for every compact subset $K \subset X$ we have that there are only finitely many simplices of the chain whose image intersects $K$. We denote by $C_n^{\lf}(X; R)$ the module of singular locally finite $n$-chains on $X$ with $R$-coefficients. We observe that the obvious extension of the usual boundary operator sends locally finite chains to locally finite chains, hence it defines a boundary operator on $C_{*}^{\lf}(X; R)$; the homology of this complex with respect to this differential is denoted by $H_{*}^{\lf}(X;R)$ and is called the locally finite homology of $X$ with $R$-coefficients. We remark that, if $X$ is compact, then the locally finite homology coincides with the usual singular homology.

From now on, unless otherwise stated, when we omit the coefficients we will understand that $R=\mathbb{R}$.

We can endow the complex of locally finite chains $C_{*}^{\lf}(X)$ with the $\ell^1$-norm as follows:
\[||\sum_{i\in I}a_i \sigma_i||_1:=\sum_{i \in I}|a_i|.\]
We stress that a locally finite chain may have an infinite $\ell^1$-norm.

This norm induces a seminorm on $H_{*}^{\lf}(X)$, which we will again denote by $||\cdot||_1$.

From standard algebraic topology (see e.g. \cite[Theorem 5.4]{Lothesis}), we have that if $M$ is an oriented connected $n$-manifold, then $H_n^{\lf}(M, \mathbb{Z})\simeq \mathbb{Z}$, and this allows us to define the fundamental class $[M]_{\mathbb{Z}}\in H_n^{\lf}(M, \mathbb{Z})$ as a generator of this group. The image of $[M]_{\mathbb{Z}}$ under the change of coefficients map $H_n^{\lf}(M, \mathbb{Z}) \rightarrow H_n^{\lf}(M)$ is called the \textit{real} fundamental class and is denoted by $[M]\in H_n^{\lf}(M)$.

If $M$ is an oriented connected $n$-manifold, the \textit{simplicial volume} of $M$ is defined as
\[||M||=||[M]||_1.\]

We state two results from \cite{FM19} that give finiteness or vanishing conditions on the simplicial volume in terms of the existence of amenable covers with small multiplicity. First, we give some definitions.

We say that a subset $U$ of a topological space $X$ is amenable if $i_*(\pi_1(U,p))$ is an amenable subgroup of $\pi_1(X,p)$ for any choice of basepoint $p \in U$, where $i: U \hookrightarrow X$ denotes the inclusion. Following \cite{FM19}, we give the following definition.
\begin{defn}\label{amenatinfty}
A sequence of subsets $\{V_j\}_{j \in \mathbb{N}}$ is \textit{amenable at infinity} if there exists a sequence of neighborhoods of infinity $\{U_j\}_{j\in \mathbb{N}}$ such that
\begin{itemize}
    \item $\{U_j\}_{j \in \mathbb{N}}$ is locally finite;
    \item $V_j \subset U_j$ for each $j \in \mathbb{N}$;
    \item there exists $j_0 \in \mathbb{N}$ such that for every $j \geq j_0$ it holds that $V_j$ is an amenable subset of $U_j$.
\end{itemize}
\end{defn}

\begin{thm}[{\cite[Corollary 12]{FM19}}]\label{teoameninft}
Let $M$ be an open oriented triangulable manifold of dimension $m$, let $W$ be a neighborhood of infinity and let $\mathcal{U}=\{U_j\}_{j \in \mathbb{N}}$ be an open cover of $W$ such that each $U_j$ is relatively compact in $M$. Let us suppose also that $\mathcal{U}$ is amenable at infinity and that $\mult(\mathcal{U})\leq m$; then $||M||< + \infty$.
\end{thm}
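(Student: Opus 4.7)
The plan is to show finiteness of $\|M\| = \|[M]\|_1$ by splitting the locally finite fundamental class into a compact piece and a piece supported in $W$, and then using the amenable cover on $W$ to produce a representative of the second piece with controlled $\ell^1$-norm. Concretely, let $\{N_j\}_{j\in\mathbb{N}}$ be the locally finite sequence of neighborhoods of infinity furnished by Definition \ref{amenatinfty}, with $U_j \subset N_j$ and $U_j$ amenable in $N_j$ for $j \geq j_0$. Pick a compact codimension-zero submanifold $K \subset M$ whose complement is contained in $W$ and write, via the long exact sequence of the pair $(M,M\setminus \mathrm{int}(K))$, the fundamental class as a sum of a class coming from $H_m(K,\partial K)$ (automatically of finite $\ell^1$-seminorm) and a class coming from the tail inside $W$.

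The core of the argument is then to bound the $\ell^1$-seminorm of the tail, and here I would imitate Gromov's amenable cover vanishing machinery adapted to the locally finite setting. First, use a partition of unity subordinate to $\mathcal{U}$ to subdivide a locally finite cycle representing $[M]|_W$ into pieces, each supported in some $U_j$; because $\mathrm{mult}(\mathcal{U})\leq m$ the combinatorics of this subdivision is controlled by an $(m-1)$-dimensional nerve, which is the key dimensional input. Next, since each $U_j$ (for $j \geq j_0$) is amenable in the ambient $N_j$, bounded cohomology duality (the fact that amenable subgroups do not obstruct the vanishing of relative bounded cohomology, as in Gromov's equivalence theorem) allows each $U_j$-supported piece to be replaced by a chain in $N_j$ whose $\ell^1$-norm is arbitrarily small. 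The local finiteness of $\{N_j\}$ guarantees that gluing these small chains together yields a bona fide locally finite chain on $M$, and summing the individual norms gives a uniform bound, so the global $\ell^1$-norm of the tail can be made finite (in fact arbitrarily small).

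Combining the two parts yields a locally finite representative of $[M]$ of finite $\ell^1$-norm, hence $\|M\|<+\infty$. The main obstacle I anticipate is making the diffusion/averaging step rigorous in the locally finite setting: one has to ensure that the small chains produced on each $N_j$ actually assemble into a locally finite chain (this is where the locally finite hypothesis on $\{N_j\}$, not merely on $\mathcal{U}$, is essential), and that the error terms from the partition-of-unity subdivision do not accumulate beyond control across infinitely many overlapping $U_j$'s. The multiplicity bound $\mathrm{mult}(\mathcal{U})\leq m$ is used twice: once to keep the nerve combinatorics tractable, and once to ensure that only finitely many cover elements contribute to each simplex of the subdivided chain, so the amenability-based diffusion can be performed one cover element at a time without introducing uncontrolled cross-terms.
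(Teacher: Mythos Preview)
This statement is not proved in the paper at all: it is quoted verbatim as \cite[Corollary 12]{FM19} and used as a black box in Section~\ref{proof:sec}. There is therefore no ``paper's own proof'' to compare your proposal against; the paper's contribution lies in producing, under its hypotheses, covers $\mathcal{U}$ to which this cited result applies.

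As for your sketch on its own merits: the overall shape (split off a compact piece carrying finite norm, then control the tail supported in $W$ via amenability and the multiplicity bound, using local finiteness of the ambient $\{N_j\}$ to reassemble) matches the philosophy of \cite{FM19}, and you correctly flag the genuine difficulty, namely making the averaging/diffusion step rigorous for locally finite chains. But the middle paragraph is not yet a proof. A partition of unity does not by itself turn a locally finite cycle into pieces that are cycles in the $U_j$'s, and ``bounded cohomology duality'' alone does not let you replace a chain supported in $U_j$ by an arbitrarily small chain in $N_j$ while keeping the boundaries matched across overlaps; one needs a mechanism (in \cite{FM19}, Gromov's theory of multicomplexes and the associated diffusion of locally finite chains) that simultaneously handles the combinatorics of the nerve and the averaging over amenable groups. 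Your remark that $\mult(\mathcal{U})\le m$ keeps the nerve $(m-1)$-dimensional is the right intuition, but the passage from that to ``each $U_j$-supported piece can be replaced by a chain of arbitrarily small norm'' hides essentially the entire technical content of the cited result.
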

\begin{thm}[{\cite[Corollary 11]{FM19}}]\label{teoamen}
Let $M$ be an open oriented triangulable manifold of dimension $m$ and let $\mathcal{U}=\{U_j\}_{j \in \mathbb{N}}$ be an amenable open cover of $M$ (i.e. each $U_j$ is an amenable subset of $M$) such that each $U_j$ is relatively compact in $M$. Let us suppose also that $\mathcal{U}$ is amenable at infinity and that $\mult(\mathcal{U})\leq m$; then $||M||=0$.
\end{thm}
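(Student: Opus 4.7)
The plan is to adapt Gromov's classical vanishing theorem for amenable covers to the locally finite setting, the new ingredient being the amenable-at-infinity hypothesis needed to control behavior near the ends. By standard duality for the $\ell^1$-seminorm, $||M||=0$ is equivalent to producing, for each $\varepsilon>0$, a locally finite real cycle representing the fundamental class $[M]$ with $\ell^1$-norm less than $\varepsilon$, so the goal becomes to construct such small representatives directly from the cover $\mathcal{U}$.

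First I would set up the nerve $N(\mathcal{U})$ of the cover. Since $\mult(\mathcal{U})\leq m$, the nerve has topological dimension at most $m-1$. A partition of unity subordinate to $\mathcal{U}$ yields a continuous map $\nu\colon M\to N(\mathcal{U})$; the local finiteness of the enveloping neighborhoods of infinity $\{W_j\}$ from Definition \ref{amenatinfty}, combined with the relative compactness of each $U_j$, makes $\nu$ proper. Therefore $\nu_\ast\colon H_m^{\lf}(M)\to H_m^{\lf}(N(\mathcal{U}))$ is defined, and since $\dim N(\mathcal{U})\leq m-1$ we have $\nu_\ast[M]=0$, so any locally finite cycle representing $[M]$ is pushed to a boundary in the nerve.

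The heart of the argument is a Gromov--Ivanov-type diffusion chain homotopy realizing this vanishing with control on the $\ell^1$-norm. On each $U_j$, amenability of the image of $\pi_1(U_j)\to\pi_1(M)$ makes the relative bounded cohomology of the pair $(\pi_1(M),\pi_1(U_j))$ vanish in positive degrees; the standard straightening argument (equivalent to a partial homotopy of a classifying map) then yields a chain map sending singular simplices supported in $U_j$ to chains factoring through the nerve, together with a uniformly norm-bounded chain homotopy to the identity. Patching these local straightenings via the partition of unity produces a global chain map from $C_\ast^{\lf}(M)$ to chains supported over $\nu^{-1}$ of the $(m-1)$-skeleton of $N(\mathcal{U})$, chain-homotopic to the identity in $\ell^1$-norm.

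The main obstacle, and the reason the amenable-at-infinity hypothesis is needed rather than plain amenability, is to make the global chain homotopy locally finite and of finite total $\ell^1$-norm. This is where Definition \ref{amenatinfty} enters decisively: the locally finite enveloping sequence $\{W_j\}$ ensures that only finitely many straightening operators affect any compact region, while amenability of each $U_j$ inside $W_j$ (for $j$ large) supplies the bounded-cohomology vanishing locally, avoiding any global uniformity requirement on $\pi_1(M)$. Once the globally straightened locally finite cycle is produced, its image in $H_m^{\lf}(N(\mathcal{U}))=0$ forces it to be an $\ell^1$-boundary, and combining this with the norm estimate supplied by the chain homotopy yields representatives of $[M]$ of arbitrarily small $\ell^1$-norm, establishing $||M||=0$.
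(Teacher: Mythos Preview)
The paper does not contain a proof of this statement: Theorem \ref{teoamen} is quoted verbatim from \cite[Corollary 11]{FM19} and is used as a black box in Section \ref{proof:sec}. There is therefore no ``paper's own proof'' to compare your proposal against.

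That said, your sketch is in the right spirit---the argument in \cite{FM19} does ultimately rest on the nerve of the cover together with a Gromov--Ivanov type averaging over amenable groups---but as written it is too imprecise to stand as a proof, and a couple of points are misleading. First, the opening sentence is not a ``duality'' statement but simply the definition of the $\ell^1$-seminorm as an infimum. Second, and more substantively, the diffusion/straightening procedure does not literally produce ``a global chain map from $C_\ast^{\lf}(M)$ to chains supported over $\nu^{-1}$ of the $(m-1)$-skeleton''; what one actually obtains (in the approach of \cite{FM19}, which goes through Gromov's theory of multicomplexes) is a factorisation of the fundamental class through a complex of dimension at most $m-1$ \emph{up to a controlled error}, and the delicate point is precisely to make this compatible with local finiteness. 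Your paragraph on this is where all the real work hides, and phrases like ``patching these local straightenings via the partition of unity'' and ``combining this with the norm estimate supplied by the chain homotopy'' are doing far more than they explain. Finally, note that the two amenability hypotheses play distinct roles: amenability of each $U_j$ in $M$ is what feeds the classical vanishing mechanism, while amenability of $U_j$ inside the enveloping neighborhood of infinity $W_j$ is what guarantees that the averaging can be performed \emph{locally at infinity}, so that the resulting chain homotopy remains locally finite; your sketch conflates these somewhat.
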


\section{Proof of Theorem \ref{teor1}, Theorem \ref{teor2} and Theorem \ref{teor3}}\label{proof:sec}

\begin{proof}[Proof of Theorem \ref{teor1}]
First, let us deal with the 1-ended case. Since $M$ is inward tame, by Proposition \ref{inwprop} we have that $M$ has semistable fundamental group at infinity. Let $\mathcal{G}=\{G_0 \longleftarrow G_1 \longleftarrow \ldots\}$ be a sequence whose maps are all surjective which is pro-isomorphic to a sequence that realizes the fundamental group at infinity of $M$. Let $\{U_j\}_{j \in \mathbb{N}}$ be a cofinal sequence of clean $0$-neighborhoods of infinity given by Proposition \ref{lemintinf}, i.e. such that $\{n_j\}_{j\in \mathbb{N}}$ is a strictly increasing sequence, $\pi_1(U_j) \simeq G_{n_j}$ and that the inclusions $U_{j+1} \hookrightarrow U_{j}$ induce surjective maps between the fundamental groups for each $j \in \mathbb{N}$. By Lemma \ref{lemamn}, we have that $\pi_1(U_j)$ is amenable for each $j \in \mathbb{N}$. Let $V_j:= U_j \setminus \inte(U_{j+1})$; for $j \geq 0$, $V_j$ is a connected codimension-$0$ submanifold of $M$ with two boundary components, which are $\partial U_j$ and $\partial U_{j+1}$.

For every $j \in \mathbb{N}$, let us fix a regular neighborhood $N_j$ in $M$ of the boundary component of $U_j$ and a homeomorphism $N_j \cong \partial U_j \times [-1,1]$ such that $\partial U_j \times [-1,0)$ is contained in $U_{j-1} \setminus U_j$ and $\partial U_{j+1} \times (0,1]$ is contained in $U_{j+1} \setminus U_{j+2}$. Now, for each $j \in \mathbb{N}$ let us set $\widetilde{V_j}:= V_j \bigcup \partial U_j \times (-1/2,0] \bigcup \partial U_{j+1} \times [0, 1/2)$. Moreover, let us set $\widetilde{V}_{-1}:= \inte (M \setminus U_0) \bigcup \partial U_0 \times [0, 1/2)$. We have that $\widetilde{V_j}$ is open and relatively compact, and that the open cover $\mathcal{V}=\{\widetilde{V_j}\}_{j={-1}}^{+\infty}$ has multiplicity $2$. Moreover, for each $j \in \mathbb{N}$ we have that $\widetilde{V_j}$ is contained in $\widetilde{U_j}:=U_j \bigcup \partial U_j \times (-1,0]$;
the $\widetilde{U_j}$'s are locally finite and homotopy equivalent to the $U_j$'s, so in particular they have an amenable fundamental group. Hence, the image of $\pi_1(\widetilde{V_j})$ in $\pi_1(\widetilde{U_j})$ under the map induced by the inclusion $\widetilde{V_j}\hookrightarrow \widetilde{U_j}$ is also amenable. It follows that $\mathcal{V}$ is amenable at infinity. We can conclude that $||M||<+ \infty$ thanks to Theorem \ref{teoameninft}.

If $M$ has $k$ ends and $k \in \mathbb{N}_{\geq2}$, thanks to Remark \ref{finmany} we can simply apply the same procedure to each end separately and get the same conclusion.
\end{proof}

\begin{proof}[Proof of Theorem \ref{teor2}]
In the 1-ended case, for each $j\in \mathbb{N} \cup \{-1\}$ let us take $\widetilde{V_j}$ as in the proof of Theorem \ref{teor1}: since $\pi_1(M)$ is amenable, we have that the $\widetilde{V_j}$'s form an amenable open cover with the same properties as above, so by Theorem \ref{teoamen} we have that $||M||=0$.

If $M$ has more than one end, the conclusion follows again from Remark \ref{finmany}.
\end{proof}

\begin{rem}
Our proofs of Theorem \ref{teor1} and Theorem \ref{teor2} work even if $M$ is a triangulable open manifolds of dimension $n\geq 5$ with finitely many ends, such that the sequence which defines the fundamental group at infinity at each end is semistable and with amenable fundamental group at infinity at each end.
\end{rem}

We extend Theorem \ref{teor1} and Theorem \ref{teor2} to the case of finitely-many-ended manifolds that are simply connected at infinity (which, as remarked in Section \ref{fgi:sec}, need not be inward tame).

\begin{proof}[Proof of Theorem \ref{teor3}]
As usual, we write the proof in the 1-ended case, noting that it can be generalized to the finitely-many-ended one thanks to Remark \ref{finmany}. We are going to prove that if $M$ is simply connected at infinity then the fundamental groups of neighborhoods of infinity of $M$ are finitely presented, so that we can apply Proposition \ref{lemintinf}. Let $U_0 \supset U_1 \supset U_2 \supset \ldots$ be a cofinal sequence of nested clean $0$-neighborhoods of infinity; by Lemma \ref{lemmscinf}, the induced sequence on the fundamental groups is pro-trivial, i.e. (up to taking a subsequence) it fits in a diagram of the following form: 

\begin{center}
\begin{tikzcd}
\pi_1(U_0) & & \arrow[ll, "\lambda_1"] \pi_1(U_1) \arrow[dl, ""] & & \pi_1(U_2) \arrow[ll, "\lambda_2"] \arrow[dl, ""] &  \ldots \arrow[l] \\
& 1 \arrow[ul] & & \arrow[ll] 1 \arrow[ul] & & 1 \arrow[ll, ""] \arrow[ul] & \ldots \arrow[l]
\end{tikzcd} 
\end{center}
where $\lambda_j: \pi_1(U_{j+1})\rightarrow \pi_1(U_j)$ is the map induced by the inclusion $U_{j+1} \hookrightarrow U_j$ which, by commutativity of the diagram, is the trivial map. Let us fix $j\in \mathbb{N}$, let $K$ be the compact subset $U_j \setminus \inte U_{j+1}$. By Van Kampen Theorem, we have that \[\pi_1(U_j) \simeq \pi_1(K) *_{\pi_1(\partial U_{j+1})} \pi_1(U_{j+1}).\]
Let $\mu$ denote the map induced by the inclusion $\partial U_{j+1} \hookrightarrow K$. Since the elements of $\pi_1(U_{j+1})$ are trivial in the amalgamated product, $\pi_1(U_j)$ coincides with $\pi_1(K)/\mu(\pi_1(\partial U_{j+1}))$ (we can simplify the standard presentation of the amalgamated product by cancelling the generators and the relations of $\pi_1 (U_{j+1})$). From the compactness of $K$ and $\partial U_{j+1}$, it follows that $\pi_1(U_j)$ is a quotient of a finitely presentable group by a finitely presentable group, and so is finitely presentable. Therefore we can apply Proposition \ref{lemintinf} to obtain a sequence of simply connected neighborhoods of infinity, and then we can conclude that $||M||<+\infty$ as in the proof of Theorem \ref{teor1}. If in addition $\pi_1(M)$ is amenable, the vanishing of $||M||$ is deduced as in the proof of Theorem \ref{teor2}.
\end{proof}

\bibliography{biblio_amn}
\bibliographystyle{alpha}

\end{document}